\newtheorem{theorem}{Theorem}
\newtheorem{lemma}{Lemma}[section]
\newtheorem{remark}[lemma]{Remark}
\newcommand\om{\omega}
\newcommand\la{\lambda}
\newcommand\F{\mathrm{F}}
\newcommand\nur{\mathcal{N}}
\newcommand\R{\mathbb{R}}
\newcommand\N{\mathbb{N}}
\newcommand\tm{\subseteq}
\newcommand{\nm}{\phantom{-}}
\newcommand\norm[1]{\|#1\|}
\newcommand\set[1]{\{#1\}}
\newcommand\f {\mathbf}
\title
{Regularization of systems of nonlinear ill-posed equations: I.
Convergence Analysis}
\author
{}
\begin{document}




\maketitle

\centerline{\scshape Markus Haltmeier}
{\footnotesize
\centerline{Department of Computer Science, University of Innsbruck}
   \centerline{Technikerstrasse 21a, A-6020 Innsbruck, Austria}
}
\medskip
\centerline{\scshape Antonio Leit\~ao}
{\footnotesize
\centerline{Department of Mathematics, Federal University of St. Catarina}
   \centerline{P.O. Box 476, 88040-900 Florian\'opolis, Brazil}
}
\medskip
\centerline{\scshape Otmar Scherzer}
{\footnotesize
\centerline{Department of Computer Science, University of Innsbruck}
\centerline{Technikerstrasse 21a, A-6020 Innsbruck, Austria}
\centerline{ and }
\centerline{RICAM, Austrian Academy of Sciences}
\centerline{ Altenbergerstrasse 69, A-4040 Linz, Austria.}
}
\medskip

\noindent
\textbf{2000 Mathematics Subject Classification.}
Primary: 65J20, 65J15; Secondary: 47J06. \\
\textbf{Keywords.}
Ill-posed systems; Landweber--Kaczmarz; Regularization.


\begin{abstract}
In this article we develop and analyze novel iterative regularization techniques for
the solution of systems of nonlinear ill--posed operator equations.  The basic idea consists
in considering separately each equation of this system and  incorporating a loping strategy.
The first technique is a Kaczmarz--type method, equipped with a
novel stopping criteria. The second method is obtained using an embedding strategy,
and again a Kaczmarz--type approach.
We prove well-posedness, stability and convergence of both methods.
\end{abstract}

\section{Introduction}
\label{sec:intro}

In this article  we investigate regularization methods for solving
\textit{linear} and \textit{nonlinear} \textit{systems of ill--posed operator equations}.
Many practical \textit{inverse problems} are naturally formulated in such a way
\cite{boc02, BurHofPalHalSch05, Ep03, HalFid06, KruKisReiKruMil03, LMZ06, LMZ06a, Nat97, Nat01, XuMWan06}.

We consider the problem of determining some physical quantity  $x$ from data $(y^i)_{i=0}^{N-1}$, which is
functionally related by
\begin{equation}\label{eq:f-ix}
    \F_{i}( x )
     =
    y^i  \,,
   \quad
    i = 0, \dots, N-1
    \,.
\end{equation}
Here $\F_i: D_i\tm X \to Y$ are operators between separable
Hilbert spaces $X$ and $Y$.
We are specially interested in the situation where the data is not exactly
known, i.e.,  we have only an approximation  $y^{\delta,i}$ of the exact
data, satisfying
\begin{equation}\label{eq:noisy-i}
    \norm{ y^{\delta,i} - y^i}
    < \delta^i
    \,.
\end{equation}

Standard  methods for the solution of such systems are based on rewriting
(\ref{eq:f-ix}) as a single equation
\begin{equation}\label{eq:f-x}
    \F( x )
     =
    \f y \,,
   \quad
    i = 0, \dots, N-1
    \,,
\end{equation}
where  $\F : = 1/\sqrt{N} \cdot ( \F_0, \dots, F_{N-1} )$ and $\f y = 1/\sqrt{N} \cdot (y^0, \dots,  y^{N-1})$.
There are at least two basic concepts for solving ill posed equations of  the form
(\ref{eq:f-x}):   \textit{Iterative} regularization methods
(cf., e.g., \cite{Lan51, HanNeuSch95, EngHanNeu96, BakKok04, KalNeuSch06})
and  \textit{Tikhonov type} regularization methods
\cite{Tik63b, TikArs77, SeiVog89, Mor93, EngHanNeu96}.
However these methods become inefficient if $N$ is large or the evaluations of $\F_i(x)$ and $\F_i'(x)^\ast$
are expensive.  In such a situation  Kaczmarz--type methods \cite{Kac37, Nat01} which cyclically
consider each equation in (\ref{eq:f-ix}) separately, are much faster \cite{Nat97} and are often the method of
choice in practice. On the other hand, only few theoretical results about regularizing properties of
Kaczmarz methods are available, so far.

The \textit{Landweber--Kaczmarz approach} for the solution of (\ref{eq:f-ix}), (\ref{eq:noisy-i}) analyzed in this
article consists in incorporating a bang-bang relaxation parameter in the classical Landweber--Kaczmarz
method \cite{KowSch02},
combined with a new stopping rule. Namely,
\begin{equation} \label{eq:lwk-lop}
    x_{n+1} = x_{n} - \om_n \F_{[n]}'(x_{n})^*(\F_{[n]}(x_{n})
    - y^{\delta,[n]}) \,,
\end{equation}
with
\begin{equation} \label{eq:skip}
\om_n   := \om_n( \delta,  y^\delta)   =
\begin{cases}
      1  & \norm{\F_{[n]}(x_{n}) - y^{\delta,[n]}}
      > \tau \delta^{[n]} \\
      0  & \text{otherwise}
\end{cases} \,,
\end{equation}
where $\tau > 2$  is an appropriate chosen positive constant
and $[n] := n \mod N  \in \set{0, \dots, N-1}$.
The iteration terminates if all $\omega_n$ become zero within a cycle,
that is if $\norm{ \F_{i}(x_{n}) - y^{\delta,i}} \leq \tau \delta^i$
for all $i \in \set{0, \dots, N-1}$. We shall refer to this method as
\textit{loping Landweber--Kaczmarz method} (\textsc{lLK}).
Its worth mentioning that, for noise free data, $\om_n = 1$ for all $n$ and therefore, in this special situation,
our iteration is identical to the classical Landweber--Kaczmarz method
\begin{equation} \label{eq:lwk-class}
    x_{n+1} = x_{n} - \F_{[n]}'(x_{n})^*( \F_{[n]}(x_{n}) - y^{\delta,[n]} ) \,,
\end{equation}
which is a special case of \cite[Eq. (5.1)]{Nat97}.

However, for noisy data, the \textsc{lLK} method is
fundamentally different to (\ref{eq:lwk-class}): The parameter $\omega_n$
effects that the iterates defined in (\ref{eq:lwk-lop}) become stationary
and \textit{all components} of the residual vector
$\norm{F_i(x_n)- y^{\delta, i}}$ fall below some threshold,
making (\ref{eq:lwk-lop})  a convergent regularization method.
The convergence of the residuals in the maximum norm better exploits the
error estimates (\ref{eq:noisy-i}) than standard methods, where only
\textit{squared  average} $ 1 / N \cdot \sum_{i=0}^{N-1} \norm{F_i(x_n)- y^{\delta, i}}^2$
of the residuals falls below a certain threshold.
Moreover, especially after a large number of iterations, $\omega_n$ will vanish for some $n$.
Therefore, the computational expensive
evaluation of $\F_{[n]}[x_{n}]^*$ might be loped, making the
Landweber--Kaczmarz method in (\ref{eq:lwk-lop}) a fast alternative to conventional
regularization techniques for system of equations.

The second regularization strategy considered in this  article is an embedding approach,
which consists in rewriting (\ref{eq:f-ix}) into an system of equations on the space $X^N$
\begin{equation}\label{eq:v-ixi}
  \F_i( x^i ) = y^i \,,  \quad \;  i = 0, \dots, N-1\,,
\end{equation}
with the additional constraint
\begin{equation}
\label{eq:x-const}
  \sum_{i=0}^{N-1} \norm{x^{i+1} - x^{i}}^2  = 0\,,
\end{equation}
where we set $x^{N} := x^{0}$.
Notice that if $x$ is a solution of (\ref{eq:f-ix}), then the \textit{constant vector}
$(x^i=x)_{i=0}^{N-1}$ is a solution of system (\ref{eq:v-ixi}), (\ref{eq:x-const}),
and vice versa.
This system of equations is solved using a block Kaczmarz strategy
of the form
\begin{eqnarray} \label{eq:lk-block}
  \f x_{n+1/2} &=&
  \f x_{n} - \omega_{n} \f F'(\f x_{n})^\ast ( \f F(\f x_{n})   - \f y^\delta )
  \\ \label{eq:lk-block2}
  \f x_{n+1} &=& \f x_{n+1/2} - \omega_{n+1/2} \f G ( \f x_{n+1/2} ) \,,
\end{eqnarray}
where $\f x := (x^i)_i \in X^N$, $\f y^\delta := (y^{\delta,i})_i \in Y^N$, $\f F (\f x)  :=  (\F_i(x^i))_i \in Y^N$,
\begin{equation} \label{eq:skip2}
\begin{aligned}
    \omega_n   &=
    \begin{cases}
        1  & \norm{ \f F(\f x_n) - \f y^{\delta}}
         > \tau \delta \\
        0  & \text{otherwise}
    \end{cases}\,,
        \\
    \omega_{n+1/2}
    & =
      \begin{cases}
    1  & \norm{ \f G ( \f x_{n+1/2} ) }
      > \tau  \epsilon(\delta)\\
      0  & \text{otherwise}
      \end{cases}\,,
\end{aligned}\end{equation}
with $\delta := \max\set{\delta^i}$. The strictly increasing  function $\epsilon: [0, \infty) \to [0, \infty)$
satisfies $\epsilon(\delta) \to 0$, as $\delta \to 0$, and guaranties the existence of a  finite stopping index.
A natural choice is $\epsilon(\delta) = \delta$.
Moreover, up to a positive multiplicative constant, $\f G$ corresponds to the
steepest descent direction of the functional
\begin{equation} \label{eq:lk-block3}
  \mathcal{G} (\f x) := \sum_{i=0}^{N-1} \norm{x^{i+1} - x^{i}}^2
\end{equation}
on $X^N$. Notice that (\ref{eq:lk-block2}) can also be
interpreted as a Landweber--Kaczmarz step with respect to the equation
\begin{equation}\label{eq:op-D}
  \lambda D( \f x) = 0 \,,
\end{equation}
where $D(\f x) = (x^{i+1}- x^{i})_i  \in  X^N$ and $\lambda$
is a small positive parameter such that $\norm{\lambda D}\leq 1$.
Since  equation (\ref{eq:f-ix}) is embedded into a system of equations on a higher dimensional
function space we call the resulting regularization technique
\emph{embedded Landweber--Kaczmarz} (\textsc{eLK}) method.  As shown in Section \ref{sec:elk}, (\ref{eq:lk-block}), (\ref{eq:lk-block2})  generalizes the Landweber
method for solving (\ref{eq:f-x}).

The article is  outlined as  follows.    In Section \ref{sec:lk} we
investigate the \textsc{lLK} method with the  novel
parameter stopping rule. We prove well--posedness, stability and
convergence, as the noise level tends to zero.
Moreover, we show that all components of the residual  vector fall below
a certain threshold. In Section \ref{sec:elk} we
analyze the \textsc{eLK}  method. In particular,
we make use of the results in Section \ref{sec:lk}  to prove that
the \textsc{eLK}  method is well posed,
convergent and stable.

\section{Analysis of the loping Landweber--Kaczmarz method}
\label{sec:lk}

In this section we present the convergence analysis of the
\textit{loping Landweber--Kaczmarz} (\textsc{lLK}) method.
The novelty of our approach consists in omitting an update in the Landweber Kaczmarz iteration, within one cycle,
if the corresponding $i$--th residual is below some threshold, see (\ref{eq:skip}).
Consequently, the \textsc{lLK} method is not stopped until all residuals
are below the specified threshold.
Therefore, it is the natural counterpart of the Landweber--Kaczmarz iteration
\cite{Kac37, Nat01} for ill--posed problems.

The following assumptions are standard in the convergence
analysis of  iterative regularization methods \cite{EngHanNeu96,HanNeuSch95,KalNeuSch06}.
We assume
that $\F_i$ is \textit{Fr\'echet differentiable} and that there  exists
$ \rho > 0 $ with
\begin{equation} \label{eq:scal}
  \norm{\F_i'(x)}_{Y} \le 1 \,,  \qquad  x  \in B_{\rho}(x_0) \subset
  \bigcap_{i=0}^{N-1} D_{i} \,.
\end{equation}
Here $B_{\rho}(x_0)$ denotes the closed ball of radius
$\rho$ around the starting value $x_0$,  $D_{i}$ is   the domain of $\F_i$,
and $\F_i'(x)$ is the  Fr\'echet derivative of $\F_i$ at $x$.

Moreover, we assume that the \textit{local tangential  cone condition}
\begin{equation} \label{eq:nlc}
\begin{aligned}
  \norm{\F_i(x) - \F_i(\bar{x}) - \F_i'(x)( x - \bar{x})}_Y
  \leq \eta \norm{ \F_i(x)-\F_i(\bar{x}) }_{Y} \,, \\
  x, \bar{x} \in  B_{\rho}  (x_0)  \subset D_{i}
\end{aligned}
\end{equation}
holds for some $\eta < 1 / 2$. This is a central assumption in the analysis  of iterative methods
for the solution of nonlinear ill--posed problems \cite{EngHanNeu96, KalNeuSch06}.

In the analysis of the \textsc{lLK} method we assume that
$\tau$ (used in the definition (\ref{eq:skip}) of $\omega_n$) satisfies
\begin{equation}\label{eq:defitau}
  \tau > 2 \frac{1+\eta}{1-2\eta} > 2 \,.
\end{equation}
Note that, for noise free data, the \textsc{lLK} method
is equivalent to the classical Landweber--Kaczmarz method, since
$\omega_n = 1$ for all $n\in \N$.

In the case of noisy  data, iterative regularization methods require
early termination, which is enforced by an appropriate stopping
criteria. In order to motivate the stopping criteria, we derive in the following
lemma an estimate related to the monotonicity of the sequence $x_n$ defined in (\ref{eq:lwk-lop}).


\begin{lemma}\label{lem:mon1}
Let $x$ be a solution of (\ref{eq:f-ix}) where $\F_i$ are  Fr{\'e}chet differentiable
in $B_\rho(x_0)$, satisfying (\ref{eq:scal}), (\ref{eq:nlc}).
Moreover, let  $x_n$ be the sequence defined in (\ref{eq:lwk-lop}), (\ref{eq:skip}).
Then
\begin{equation} \label{eq:mon}
    \begin{aligned}
        &\norm{ x_{n+1} - x}^2 - \norm{x_{ n } - x}^2 \\
        &\leq \om_n \norm{\F_{[n]}(x_{n}) - y^{\delta, [n]}}
        \biggl( 2(1+\eta) \delta^{[n]} - (1-2\eta)\norm { \F_{[n]} (x_{ n } ) - y^{\delta, [n]} } \biggr),
\end{aligned}
\end{equation}
where $[n] = \mod(n, N)$.
\end{lemma}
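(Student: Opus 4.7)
\medskip
\noindent\textbf{Proof plan.}
The natural starting point is to expand the difference of squared distances. If $\omega_n = 0$, then $x_{n+1} = x_n$ and the right-hand side of \eqref{eq:mon} vanishes as well, so the inequality is trivial. Hence the only case that requires work is $\omega_n = 1$, and I will concentrate on that one.

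Writing $R_n := \F_{[n]}(x_n) - y^{\delta,[n]}$ and $s_n := \F_{[n]}'(x_n)^\ast R_n$, the iteration gives
\begin{equation*}
  \norm{x_{n+1}-x}^2 - \norm{x_n-x}^2
  = -2\langle x_n - x, s_n\rangle + \norm{s_n}^2.
\end{equation*}
Rewriting the inner product as $\langle \F_{[n]}'(x_n)(x_n-x),\, R_n\rangle$ and using \eqref{eq:scal} to estimate $\norm{s_n}^2 \le \norm{R_n}^2$, the task reduces to producing a lower bound for $\langle \F_{[n]}'(x_n)(x_n-x),\, R_n\rangle$ in terms of $\norm{R_n}$ and $\delta^{[n]}$.

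For the inner product I would invoke the tangential cone condition \eqref{eq:nlc} with $\bar x = x$. Since $x$ solves \eqref{eq:f-ix}, we have $\F_{[n]}(x) = y^{[n]}$, and hence
\begin{equation*}
  \F_{[n]}'(x_n)(x_n-x) = \bigl(\F_{[n]}(x_n) - y^{[n]}\bigr) - r_n,
  \qquad \norm{r_n} \le \eta \norm{\F_{[n]}(x_n) - y^{[n]}}.
\end{equation*}
Setting $e_n := y^{\delta,[n]} - y^{[n]}$, so that $\norm{e_n}\le\delta^{[n]}$ and $\F_{[n]}(x_n) - y^{[n]} = R_n + e_n$, a short calculation with Cauchy--Schwarz gives
\begin{equation*}
  \langle \F_{[n]}'(x_n)(x_n-x), R_n\rangle
  \ge \norm{R_n}^2 - \delta^{[n]}\norm{R_n} - \eta\bigl(\norm{R_n}+\delta^{[n]}\bigr)\norm{R_n}.
\end{equation*}

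Plugging this back, together with $\norm{s_n}^2 \le \norm{R_n}^2$, into the expansion yields
\begin{equation*}
  \norm{x_{n+1}-x}^2 - \norm{x_n-x}^2
  \le -(1-2\eta)\norm{R_n}^2 + 2(1+\eta)\delta^{[n]}\norm{R_n},
\end{equation*}
which, after factoring out $\norm{R_n}$ and reinserting the factor $\omega_n \in \{0,1\}$, is exactly \eqref{eq:mon}. The only subtle point is the bookkeeping in the tangential cone step --- in particular, controlling $\norm{\F_{[n]}(x_n) - y^{[n]}}$ by $\norm{R_n}+\delta^{[n]}$ rather than by $\norm{R_n}$ alone, which is what produces the $(1+\eta)$ rather than $\eta$ in front of $\delta^{[n]}$; everything else is routine.
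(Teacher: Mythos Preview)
Your argument is correct and is precisely the standard computation that the paper invokes by citing \cite[Proposition~2.2]{HanNeuSch95}: expand $\norm{x_{n+1}-x}^2-\norm{x_n-x}^2$, move the adjoint across, bound $\norm{s_n}^2$ via \eqref{eq:scal}, and control the inner product with the tangential cone condition \eqref{eq:nlc} together with the noise estimate \eqref{eq:noisy-i}. The only case distinction, $\omega_n=0$ versus $\omega_n=1$, is handled exactly as in the paper, so there is no methodological difference---you have simply written out what the paper leaves to the reference.
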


\begin{proof}
The proof follows the lines of \cite[Proposition 2.2]{HanNeuSch95}.
Notice that if $\omega_n$ is different from zero, inequality (\ref{eq:mon})
follows analogously as in \cite{HanNeuSch95}. In the case $\omega_n=0$, (\ref{eq:mon}) follows from
$x_n = x_{n+1}$.
\end{proof}

Motivated, by Lemma \ref{lem:mon1} we define the termination index
$n_\ast^\delta = n_\ast^\delta(y^\delta)$ as the smallest integer multiple of $N$ such that
\begin{equation} \label{eq:lk-stop}
  x_{n_\ast^\delta} = x_{n_\ast^\delta+1} = \cdots = x_{n_\ast^\delta + N} \,.
\end{equation}

Now we have the following monotonicity result:

\begin{lemma}\label{lem:mon2}
Let $x$, $\F_i$ and $x_n$ be defined as in Lemma \ref{lem:mon1} and $n_\ast^\delta$ be defined
by (\ref{eq:lk-stop}). Then we have
\begin{equation} \label{eq:mon1}
    \norm{ x_{n+1} - x}
    \leq \norm{x_{n} - x} \;,\quad  n =0,\dots, n_\ast^\delta\;.
\end{equation}
Moreover, the stoping rule (\ref{eq:lk-stop}) implies $\omega_{n_\ast^\delta+i} = 0$ for all
$i \in \set{0, \dots, N-1}$, i.e.,
\begin{equation} \label{eq:res-small}
  \norm{\F_i(x_{n_\ast^\delta}) - y^{\delta,i} } \leq \tau \delta^i \,,
  \quad
  i=0,\dots, N-1 \,.
\end{equation}

\end{lemma}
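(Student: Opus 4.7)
The plan is to derive both conclusions directly from Lemma~\ref{lem:mon1} combined with the specific choice of $\omega_n$ in (\ref{eq:skip}) and the hypothesis $\tau > 2(1+\eta)/(1-2\eta)$ from (\ref{eq:defitau}). The key observation is that, when the bang-bang parameter $\omega_n$ equals $1$, the bracket on the right-hand side of (\ref{eq:mon}) is not merely nonpositive but \emph{strictly} negative, and this strictness is what drives the stopping criterion.

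For the monotonicity (\ref{eq:mon1}), I would fix $n \in \{0, \dots, n_\ast^\delta\}$ and split into two cases. If $\omega_n = 0$ then $x_{n+1} = x_n$ by (\ref{eq:lwk-lop}) and there is nothing to show. If $\omega_n = 1$ then by (\ref{eq:skip}) one has $\|\F_{[n]}(x_n) - y^{\delta,[n]}\| > \tau \delta^{[n]}$, and substituting the strict inequality $\tau > 2(1+\eta)/(1-2\eta)$ gives
\begin{equation*}
 2(1+\eta)\delta^{[n]} - (1-2\eta) \|\F_{[n]}(x_n) - y^{\delta,[n]}\| \;<\; 0.
\end{equation*}
Multiplying by the nonnegative factor $\omega_n \|\F_{[n]}(x_n) - y^{\delta,[n]}\|$ and inserting into (\ref{eq:mon}) yields $\|x_{n+1}-x\|^2 \le \|x_n - x\|^2$, hence (\ref{eq:mon1}).

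For the second claim I would argue by contradiction. Assume there exists $i \in \{0, \dots, N-1\}$ with $\omega_{n_\ast^\delta + i} = 1$. The computation of the previous paragraph then shows the right-hand side of (\ref{eq:mon}) is strictly negative at $n = n_\ast^\delta + i$, so Lemma~\ref{lem:mon1} forces $\|x_{n_\ast^\delta + i + 1} - x\|^2 < \|x_{n_\ast^\delta + i} - x\|^2$. But (\ref{eq:lk-stop}) gives $x_{n_\ast^\delta + i + 1} = x_{n_\ast^\delta + i}$, a contradiction. Hence $\omega_{n_\ast^\delta + i} = 0$ for every such $i$, and the definition (\ref{eq:skip}) translates this into $\|\F_{[n_\ast^\delta + i]}(x_{n_\ast^\delta + i}) - y^{\delta,[n_\ast^\delta + i]}\| \le \tau \delta^{[n_\ast^\delta + i]}$. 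Since $n_\ast^\delta$ is a multiple of $N$ we have $[n_\ast^\delta + i] = i$, and $x_{n_\ast^\delta + i} = x_{n_\ast^\delta}$ by (\ref{eq:lk-stop}), giving exactly (\ref{eq:res-small}).

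The only mildly delicate point is keeping the inequality strict when $\omega_n = 1$: it relies on $\tau \delta^{[n]}$ being a \emph{strict} lower bound for the residual in the definition of $\omega_n$, so that even if $\delta^{[n]} = 0$ the residual is positive and the bracket is negative. No further ingredients beyond Lemma~\ref{lem:mon1} and the definitions (\ref{eq:skip}), (\ref{eq:defitau}), (\ref{eq:lk-stop}) are needed.
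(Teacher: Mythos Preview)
Your proof is correct and follows essentially the same route as the paper's: both split on whether $\omega_n$ vanishes, and when $\omega_n=1$ both use (\ref{eq:skip}) together with (\ref{eq:defitau}) to make the bracket in (\ref{eq:mon}) strictly negative. For the second assertion the paper phrases the contradiction as ``stagnation forces the bracket $\geq 0$, but (\ref{eq:skip}) forces it $<0$'', whereas you phrase the same contradiction as ``strict decrease versus $x_{n_\ast^\delta+i+1}=x_{n_\ast^\delta+i}$''; these are logically equivalent, and your remark about the case $\delta^{[n]}=0$ makes the strictness explicit.
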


\begin{proof}
If $\omega_n=0$, then (\ref{eq:mon1}) holds since the iteration stagnates.
Otherwise, from the definitions of $\omega_n$ in (\ref{eq:skip}) and
$\tau$ in (\ref{eq:defitau}), it follows that
\begin{equation}\label{eq:helpA}
      2(1+\eta) \delta^i - (1-2\eta) \norm{ \F_{[n]}(x_n) - y^{\delta,[n]}} <  0 \;,
\end{equation}
and the right hand side in (\ref{eq:mon}) becomes non--positive.

To prove the second assertion we use (\ref{eq:mon}) for $n = n_{\ast}^\delta+i$, for $i \in \set{0, \dots, N-1}$.
By noting that $x_{n_{\ast}^\delta+i} = x_{n_{\ast}^\delta}$ and $[n_{\ast}^\delta+i] = i$, we
obtain
\[
    0 \leq \om_{n_\ast^\delta+i} \cdot \norm{ \F_{i}(x_{n_\ast^\delta} ) - y^{\delta,i} }
    \biggl( 2(1+\eta) \delta^i - (1-2\eta)\norm { y^{\delta,i} - \F_{i}(x_{n_\ast^\delta}) } \biggr) \,,
\]
for $i \in \set{0, \dots, N-1}$. Suppose $\om_{n_\ast^\delta+i} \not= 0$, then
$2 (1+\eta) \delta^i - (1-2\eta) \norm { y^{\delta,i} - \F_{i}( x_{n_\ast^\delta}) } \geq 0$,
which contradicts the definition of $\om_{n_\ast^\delta+i}$.
\end{proof}

Note that for $n > n_\ast^\delta$, $\omega_n \equiv 0$ and therefore
$x_n = x_{n_\ast^\delta}$. This shows that the Landweber--Kaczmarz method becomes
stationary after $n_\ast^\delta$.

\begin{remark}\label{rem:sum}
Similar to the nonlinear Landweber iteration one obtains the
estimate
\begin{equation}\label{eq:finite:stop}
  \frac{n_\ast^\delta \cdot \left( \tau \min_i(\delta^i) \right)^2}{N}   \leq
    \sum_{n=0}^{ n_\ast^\delta - 1}
    \om_n \norm{ y^{\delta,[n]} - \F_{[n]}(x_{n} ) } ^2
    \leq \frac{ \tau \norm{x - x_{n_\ast^\delta}}^2 }{ (1-2\eta)\tau - 2(1+\eta ) }
    \,.
\end{equation}
Here we use the notation of Lemma \ref{lem:mon1}.
\end{remark}

>From Remark \ref{rem:sum} it follows that, in the case of noisy  data,
$n_\ast^\delta < \infty$ and  the iteration terminates after a finite
number of steps.
Next, we state the main result of this section, namely that
the  Landweber--Kaczmarz method   is a convergent
regularization method.

\begin{theorem} \label{convergence}
Assume that  $\F_i$ are Fr\'echet-differentiable in
$B_{\rho}(x_0)$, satisfy (\ref{eq:scal}), (\ref{eq:nlc})
and  the system  (\ref{eq:f-ix})  has a solution in
$B_{\rho/2}(x_0)$. Then

\begin{enumerate}
\item
For exact data $y^{\delta,i} = y^i$, the sequence  $x_n$ in (\ref{eq:lwk-lop}) converges to a
solution of (\ref{eq:f-ix}).
Moreover, if $x^\dagger$ denotes the unique solution of
(\ref{eq:f-ix}) with minimal distance to $x_0$ and
\begin{equation} \label{eq:kern}
      \nur(\F_i'(x^\dagger)) \subseteq \nur(\F_i'(x))
      \;, \quad x \in B_\rho (x_0) \,, \; i \in\set{ 0, \dots, N-1}\,,
\end{equation}
then $x_{n} \to x^\dagger$.

\item
For noisy data the loping
Landweber--Kaczmarz iterates $x_{n_\ast^\delta}$ converge to a solution of
(\ref{eq:f-ix})  as $\delta \to 0$. If in
addition  (\ref{eq:kern}) holds, then $x_{n_\ast^\delta}$ converges to $x^\dagger$
as $\delta \to 0$.
\end{enumerate}
\end{theorem}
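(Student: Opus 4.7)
\emph{Proof plan.} The argument will follow the template developed by Hanke, Neubauer, and Scherzer for the nonlinear Landweber iteration \cite{HanNeuSch95}, adapted to the Kaczmarz cycle and the loping stopping rule. A preliminary remark: since the assumed solution lies in $B_{\rho/2}(x_0)$, Lemma~\ref{lem:mon2} together with the triangle inequality keeps every iterate inside $B_\rho(x_0)$, so that (\ref{eq:scal}) and (\ref{eq:nlc}) are applicable throughout.

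\emph{Part 1 (exact data).} Since $\om_n = 1$ for all $n$, Remark~\ref{rem:sum} (applied as $\delta\to 0$, so that the finite termination index is lifted) yields
\[
\sum_{n=0}^\infty \norm{\F_{[n]}(x_n) - y^{[n]}}^2 < \infty \,,
\]
and in particular $\norm{\F_i(x_{kN+i}) - y^i}\to 0$ for each $i$. The main task is to establish that $\set{x_n}$ is Cauchy. I plan to mimic \cite[Thm.~2.3]{HanNeuSch95}: for $l\geq k$, pick an intermediate index $m\in[k,l]$ that is a multiple of $N$ and that minimizes $\norm{x_j - \bar x}$ over admissible $j$ for some fixed solution $\bar x$; then estimate $\norm{x_l - x_k}^2$ through the cross-terms $\langle x_m - x_l,\, x_m - \bar x\rangle$ and $\langle x_m - x_k,\, x_m - \bar x\rangle$, which are controlled by a telescopic expansion combined with (\ref{eq:nlc}) and the residual summability. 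The limit $x^\ast$ is then a solution by continuity. Under the kernel condition (\ref{eq:kern}), I would show by induction that $x_n - x^\dagger \in \nur(\F_i'(x^\dagger))^\perp$ for every $i$: each update $x_{n+1}-x_n$ lies in $\mathrm{range}(\F_{[n]}'(x_n)^\ast) \subseteq \nur(\F_{[n]}'(x_n))^\perp \subseteq \nur(\F_{[n]}'(x^\dagger))^\perp$ by (\ref{eq:kern}). Passing to the limit and using (\ref{eq:nlc}) to identify the solution nearest to $x_0$ in this orthogonal affine subspace forces $x^\ast = x^\dagger$.

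\emph{Part 2 (noisy data).} Let $\delta_k\to 0$ and set $n_k := n_\ast^{\delta_k}(y^{\delta_k})$; write $x_n^{\delta_k}$ and $x_n^0$ for the noisy and exact-data iterates. If $\set{n_k}$ admits a bounded subsequence, refine to a constant one, $n_k\equiv n$. A continuous-dependence (stability) property of a fixed number of \textsc{lLK}\ steps—to be established earlier in Section~\ref{sec:lk}—gives $x_n^{\delta_k}\to x_n^0$, while the termination inequality (\ref{eq:res-small}) together with $\delta_k\to 0$ and continuity of $\F_i$ shows $\F_i(x_n^0) = y^i$ for each $i$. If instead $n_k\to\infty$, then for every fixed $m$ stability gives $\norm{x_m^{\delta_k} - x_m^0}\to 0$, and by Part 1, $x_m^0\to x^\ast$ for some solution. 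Since $n_k\geq m$ eventually, Lemma~\ref{lem:mon2} delivers $\norm{x_{n_k}^{\delta_k} - x^\ast}\leq \norm{x_m^{\delta_k} - x^\ast}$; letting first $k\to\infty$ and then $m\to\infty$ concludes $x_{n_k}^{\delta_k}\to x^\ast$. Under (\ref{eq:kern}), the orthogonality propagation of Part 1 identifies $x^\ast = x^\dagger$.

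\emph{Main obstacle.} The most delicate step is the Cauchy argument in Part 1. The cyclic index $[n]$ disrupts the classical Landweber telescoping, and one has to exploit the fact that every block of $N$ consecutive indices visits each equation exactly once in order to turn the single cyclic residual sum into simultaneous control of all $N$ components $\norm{\F_i(x_n) - y^i}$. The regime $n_k\to\infty$ in Part 2 is also subtle: one must rule out the noisy iterates overshooting the exact-data limit, and here the synchronization of (\ref{eq:lk-stop}) at multiples of $N$, combined with the full monotonicity estimate (\ref{eq:mon1}), is what makes the two-parameter limit $k\to\infty$, $m\to\infty$ legitimate.
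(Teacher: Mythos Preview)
Your plan is correct and tracks the paper's own proof closely. The paper is terse: for Part~1 it observes that with exact data the loping scheme \emph{is} the classical Landweber--Kaczmarz iteration and invokes \cite[Prop.~4.3]{KowSch02}; for Part~2 it points to \cite[Thm.~2.9]{HanNeuSch95}, naming (\ref{eq:res-small}) as the ingredient for the bounded-$n_k$ case and Lemma~\ref{lem:mon2} for the unbounded case---exactly the two ingredients you isolate.

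One point deserves more care than either you or the paper give it: the continuous-dependence step $x_m^{\delta_k}\to x_m^0$ in Part~2. The relaxation parameter $\om_n$ is a discontinuous (bang--bang) function of the residual, so the map $(\delta,y^\delta)\mapsto x_m^\delta$ is not continuous in the obvious sense, and Section~\ref{sec:lk} contains no such stability lemma. The argument still works by induction on the step index $j<m$: if the exact-data residual $\norm{\F_{[j]}(x_j^0)-y^{[j]}}$ is positive, then for small $\delta_k$ the noisy residual exceeds $\tau\delta_k^{[j]}$, so $\om_j=1$ and continuity of $\F_{[j]},\F_{[j]}'$ passes to the limit; if the exact residual vanishes, then $x_{j+1}^0=x_j^0$ and the noisy update has norm at most $\norm{\F_{[j]}(x_j^{\delta_k})-y^{\delta_k,[j]}}\to 0$ irrespective of whether $\om_j$ equals $0$ or $1$. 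Since the bang--bang parameter is precisely the novelty of the \textsc{lLK} scheme, you should spell this case distinction out rather than defer it.
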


\begin{proof}
The proof of the first item is analogous to the proof in \cite[Proposition
4.3]{KowSch02} (see also \cite{KalNeuSch06}).
We emphasize that, for exact data, the iteration (\ref{eq:lwk-lop})  reduces to the
classical Landweber--Kaczmarz method, which allows to apply the corresponding
result of \cite{KowSch02}.

The proof of the second item is analogous to the proof of the
corresponding result for the Landweber iteration as in
\cite[Theorem 2.9]{HanNeuSch95}. For the first case within
this proof, (\ref{eq:res-small}) is required. For the second case
we need the monotony result from Lemma \ref{lem:mon2}.
\end{proof}

In the case of noisy data (i.e. the second item of
Theorem~\ref{convergence}), it has been shown in \cite{KowSch02}
that the Landweber--Kaczmarz iteration
\begin{equation} \label{eq:lwk}
    x_{n+1} = x_{n} - \F_{[n]}'(x_{n})^\ast
    ( \F_{[n]}(x_{n}) - y^{\delta,[n]} ) \,, \quad \\
\end{equation}
is convergent if it is terminated after the $\tilde{n}^\delta$--th step, where $\tilde{n}^\delta$ is the smallest iteration index that satisfies
\begin{equation} \label{eq:disi}
    \norm{\F_{[\tilde n^\delta]}(x_{\tilde{n}^\delta}) -
    y^{\delta,[\tilde n^\delta]}} \leq \tau \delta^{[\tilde n^\delta]} \,.
\end{equation}
Therefore, in general, only one of the components of the residual vector
$\bigl( \norm{\F_i(x_{\tilde{n}^\delta}) - y^{\delta,i} } \bigr)_{i}$
is smaller than  $\tau \delta^i$, namely the \textit{active component}
$\norm{\F_{[\tilde{n}^\delta]}(x_{\tilde{n}^\delta}) - y^{\delta,[\tilde{n}^\delta]} }$.
However, the argumentation in \cite{KowSch02}  is incomplete,
in the sense that the case when $\tilde{n}^\delta$ stagnates as $\delta \to 0$,
has not been considered. Hence,  \cite[Theorem 4.4]{KowSch02} requires the additional
assumption that $\tilde{n}^\delta \to \infty$, as $\delta \to 0$, which is usually the  case in practice.

\section{Analysis of the embedded Landweber--Kaczmarz method}
\label{sec:elk}

In the \textit{embedded Landweber--Kaczmarz} (\textsc{eLK}) method for the solution of (\ref{eq:f-ix}),
$x \in X$ is substituted by a vector $\f x = (x^i)_{i=0}^{N-1}$.
In (\ref{eq:lk-block}) each component of $\f x$ is updated independently according to one of the system equations.
In the balancing step (\ref{eq:lk-block2}), the difference between the components of $\f x$  is minimized.

In order to determine $\f x_{n+1/2}$, each of its components
$x^i_{n+1/2}$ can be evaluated independently:
\[
    x^i_{n+1/2} = x^i_{n} - \omega_n  \F_{i}'(x^i_n)^\ast
\left( \F_{i}(x^i_{n}) - y^{\delta,i} \right) , \
i = 0, \dots, N-1 \, .
\]
In the balancing step (\ref{eq:lk-block2}), $\f x_{n+1}$ is determined from
$\f x_{n+1/2}$ by a matrix multiplication with the sparse
matrix $I_{X^N} - \omega_{n+1/2} \f G$, where
\[
    \f G = \lambda^2
    \left(
        \begin{array}{ccccc}
            \nm 2I &   - I  &  \nm 0   &        &   - I \\
            - I  & \nm 2I &  \ddots  & \ddots &       \\
            \nm 0  & \ddots &  \ddots  & \ddots & \nm 0 \\
                & \ddots &  \ddots  & \nm 2I &   - I \\
            - I    &        &  \nm 0   &   - I  & \nm 2I
        \end{array}
    \right)
    \in \mathcal{L}(X^N, X^N) \,.
\]
Here $\lambda$ is a small positive parameter such that $\norm{\lambda D}\leq 1$,
and the operator $\f G$ is a discrete variant of $-\la^2$ times the second derivative operator
and therefore penalizes for varying components.
As already mentioned in the introduction, the the balancing step (\ref{eq:lk-block2}) is a
Landweber--Kaczmarz step with respect to the equation (\ref{eq:op-D}).
The operator $D$ is linear and bounded,  which guaranties the existence of a positive
constant $\lambda$ such that $\lambda D$  satisfies (\ref{eq:scal}), which will be
needed in the analysis of the embedded Landweber method.
The iteration defined in (\ref{eq:lk-block}), (\ref{eq:lk-block2}) is terminated when for the first time
\begin{equation}\label{eq:lke:stop}
    \f x_{{n_\star^\delta}+1} = \f x_{{n_\star^\delta} + 1/2}= \f x_{{n_\star^\delta}}\;.
\end{equation}
The artificial  noise level  $\epsilon: [0, \infty) \to [0, \infty)$ satisfies
$\epsilon(\delta) \to 0$, as $\delta \to 0$ and guaranties the existence of
a  finite stopping index  in  the  \textsc{eLK} method.

In the sequel we shall apply the results of the Section \ref{sec:lk} to prove convergence
of the \textsc{eLK} method.
As initial guess we use a constant vector $\f x_{0} := (x_{0})_i$
whose components are identical to $x_{0}$. Moreover, our convergence analysis will again require
the scaling assumption (\ref{eq:scal}) and the tangential cone condition (\ref{eq:nlc}) to be
satisfied near $x_{0}$.

%
\begin{figure}
\begin{center}
    \includegraphics[width=0.4\textwidth]{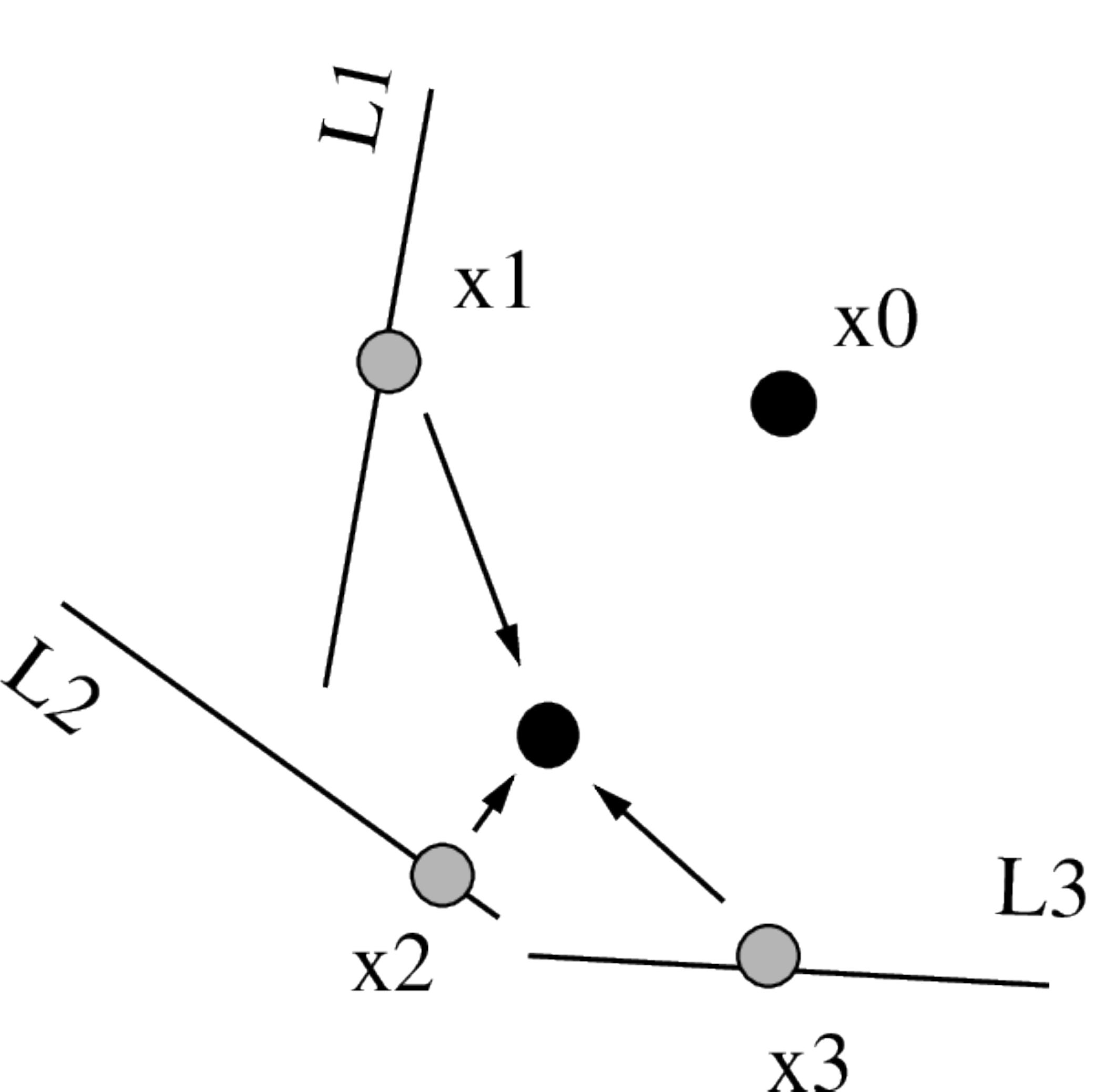}\hspace{1cm}
    \includegraphics[width=0.4\textwidth]{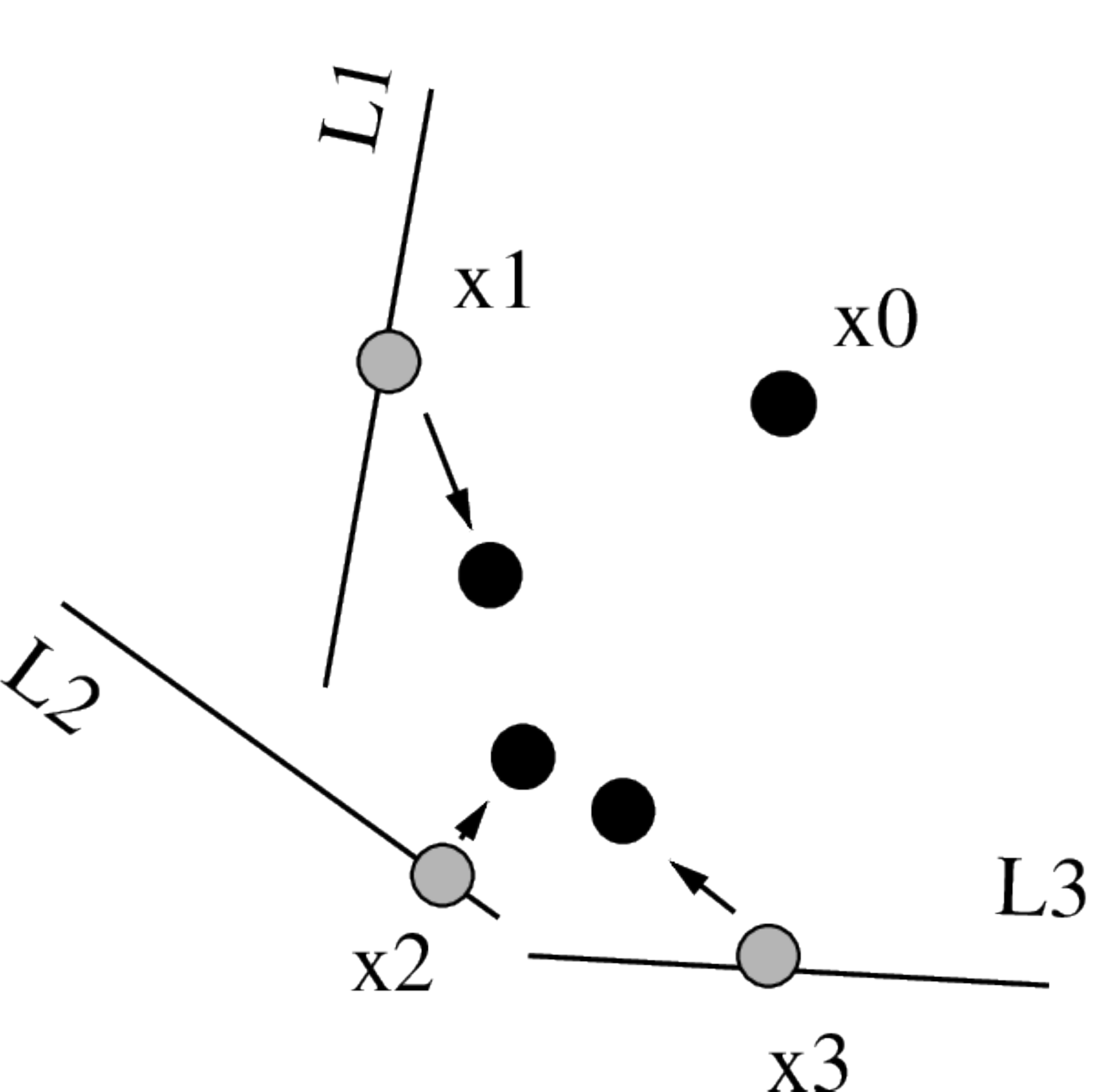}
\end{center}
\caption{Landweber versus \textsc{eLK} method for solving
$a_i \cdot x = y^i$, $i \in \set{0,1,2}$ with $\norm{a_1}=1$. In this case
$x_{n+1/2}^i := x_{n} - a_i^* ( a_i \cdot x_n - y^i )$ is the orthogonal
projection of $x_{n}$ on $l_i := \set{x\in \R^2: a_i \cdot x = y^i}$.
Each step of the Landweber iteration (left) generates a single element in $X$,
namely the average $1/3 \sum_{i=0}^2 x_{n+1/2}^i$. In contrast, a cycle in the embedded
Landweber--Kaczmarz method (right) generates a vector in $X^N$, where each component of
$\f x_{n+1}$ is a linear combination of $x_{n+1/2}^i$.}
\label{fig:emb}
\end{figure}
%

\begin{remark}[Comparison with the  classical Landweber iteration]
Let $\F := 1/\sqrt{N} \cdot ( \F_0, \dots, F_{N-1})$ and
$\f y^\delta  :=  1/\sqrt{N}  \cdot (y^{\delta,0}, \dots , y^{\delta,N-1})$.
The Landweber iteration for the solution of $\F(x) = \f y^\delta$, see (\ref{eq:f-x}), is
\cite{HanNeuSch95, EngHanNeu96}
\begin{equation*}
    \begin{aligned}
    x_{n+1}
    & =
    x_{n} -
    \F'[x_{n}]^\ast (\F  (x_{n}) - \f y^\delta )
    \cr
    & =
    x_{n} -
    \frac{1}{N} \cdot
    \sum_{i=0}^{N-1}
    \F_i'(x_{n})^\ast (\F_i (x_{n}) - y^{\delta,i})
    \cr & =
    \frac{1}{N} \cdot
    \sum_{i=0}^{N-1}
    \biggl( x_{n} - \F_i'(x_{n})^\ast (\F_i (x_{n}) - y^{\delta,i}) \biggr)
    \,.
    \end{aligned}
\end{equation*}
If we set $x_{n+1/2}^i := x_n - \F_i'(x_{n})^\ast (\F_i (x_{n}) - y^{\delta,i})$
then the Landweber method can be  rewritten in form similar to
(\ref{eq:lk-block}), (\ref{eq:lk-block2}), namely
\begin{eqnarray} \nonumber
   x_{n+1/2}^i
   &=&
   x_{n} -
   F'_i(x_{n})^\ast ( F_i(x_{n} )   - y^{\delta,i} ) \,,
  \\ \label{eq:lw-class2}
  x_{n+1} & =&
  \frac{1}{N} \cdot
    \sum_{i=0}^{N-1}
    x_{n+1/2}^i  \,.
\end{eqnarray}
Hence, the distinction between the Landweber and the \textsc{eLK} method is that (\ref{eq:lw-class2}) in the Landweber method makes all
components equal, whereas the balancing step (\ref{eq:lk-block2}) in the embedded
Landweber--Kaczmarz method leaves them distinct.

In order to illustrate the idea behind the \textsc{eLK} method, we exemplarily consider the case $N = 3$. In this case the mechanism of
the embedded iteration is explained in Figure \ref{fig:emb} in contrast to the
Landweber method.
\end{remark}

In the next theorem we prove that the termination index is well defined, as well as
convergence and stability of the \textsc{eLK} method.

\begin{theorem} \label{thm:convergence}
Assume that  the operators $\F_i$ are Fr\'echet-differentiable in
$B_{\rho}(x_0)$ and satisfy (\ref{eq:scal}), (\ref{eq:nlc}).
Moreover, we assume that (\ref{eq:f-ix})  has a solution in $B_{\rho/2}(x_0)$.
Then we have:
\begin{enumerate}
\item For exact data $y^{\delta,i} = y^i$, the sequence  $\f x^n$ in
      (\ref{eq:lk-block}), (\ref{eq:lk-block2}) converges to a
      constant vector $(x)_i$, where $x$ is a solution (\ref{eq:f-ix}) in
      $B_{\rho/2}(x_0)$.
      Additionally, if the operators $\F_i$ satisfy (\ref{eq:kern}), then
      the sequence $\f x^n$ converges to the constant vector
      $\f x^\dag = (x^\dag)_i$,
      where $x^\dag$ is the unique solution of minimal distance
      to $x_{0}$.

\item For noisy data $\delta >0$, (\ref{eq:lke:stop}) defines a finite termination index
      $n_\star^\delta$.
      Moreover, the embedded Landweber--Kaczmarz iteration $\f x^{n_\star^\delta}$
      converges to a constant vector $\f x = (x)_i$, where $x$ is a solution (\ref{eq:f-ix}) in
      $B_{\rho/2}(x_0)$, as $\delta \to 0$.
      If in addition (\ref{eq:kern}) holds, then each component of
      $\f x^{n_\star^\delta}$ converges to $x^\dag$, as $\delta \to 0$.
\end{enumerate}

\end{theorem}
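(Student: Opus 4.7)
The overall plan is to interpret one cycle of the embedded Landweber--Kaczmarz iteration (\ref{eq:lk-block}), (\ref{eq:lk-block2}) as a full cycle of a two-equation loping Landweber--Kaczmarz iteration on the product Hilbert space $X^N$, applied to the augmented system consisting of $\f F(\f x) = \f y^\delta$ and $\lambda D(\f x) = 0$ with individual noise levels $\delta$ and $\epsilon(\delta)$, and then to invoke Theorem \ref{convergence} directly. The central observation is that $\nur(D)$ consists precisely of constant vectors, so any joint solution of the augmented system is automatically of the form $(x)_i$ with $x$ solving (\ref{eq:f-ix}), which immediately turns the $X^N$-level convergence statements into convergence to constant solution vectors.

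The first substantive step is to check that the hypotheses of Theorem \ref{convergence} survive the passage to the augmented system. The scaling bound (\ref{eq:scal}) for $\f F$ follows componentwise from $\norm{\f F'(\f x) \f h}^2 = \sum_i \norm{\F_i'(x^i) h^i}^2 \leq \norm{\f h}^2$ on $B_\rho(\f x_0) \subseteq (B_\rho(x_0))^N$, while (\ref{eq:scal}) for $\lambda D$ is guaranteed by the choice of $\lambda$ announced just before (\ref{eq:op-D}). The tangential cone condition (\ref{eq:nlc}) for $\f F$ follows by summing the component-wise inequalities with the same $\eta < 1/2$, and for $\lambda D$ it holds trivially with $\eta = 0$ by linearity. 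Finally, if $x \in B_{\rho/2}(x_0)$ solves (\ref{eq:f-ix}), the constant vector $(x)_i$ solves the augmented system and lies at distance $\sqrt{N}\,\norm{x-x_0} \leq \sqrt{N}\rho/2$ from $\f x_0 = (x_0)_i$, so after the harmless rescaling of the working ball from radius $\rho$ to radius $\sqrt{N}\rho$ all preceding estimates remain valid.

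Granting these verifications, part (i) is immediate: for exact data $\om_n \equiv \om_{n+1/2} \equiv 1$, so the iteration coincides with the classical two-block Landweber--Kaczmarz method, and Theorem \ref{convergence}(i) yields convergence to a solution $\f x^{*}$ of the augmented system; then $D(\f x^{*}) = 0$ forces $\f x^{*} = (x^{*})_i$ with $\F_i(x^{*}) = y^i$. For part (ii), the finite termination index $n_\star^\delta$ is obtained from Remark \ref{rem:sum} applied to the augmented system (the relevant noise level $\min\{\delta, \epsilon(\delta)\}$ is strictly positive), and Theorem \ref{convergence}(ii) provides convergence of $\f x^{n_\star^\delta}$ as $\delta \to 0$ to some $\f x$ solving the augmented system; since $\norm{D(\f x^{n_\star^\delta})} \leq \tau\epsilon(\delta) \to 0$, the limit must once more be a constant vector.

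The main obstacle is upgrading this convergence to convergence toward the specific minimum-distance solution $\f x^\dag = (x^\dag)_i$. One must verify the augmented kernel condition $\nur((\f F, \lambda D)'(\f x^\dag)) \subseteq \nur((\f F, \lambda D)'(\f x))$ for every $\f x \in B_\rho(\f x_0) \subseteq X^N$; writing the augmented kernel at a general $\f x = (x^i)_i$ as $\set{(h)_i : h \in \bigcap_i \nur(\F_i'(x^i))}$, this reduces directly to applying (\ref{eq:kern}) to each component $x^i \in B_\rho(x_0)$. The last detail is to identify the resulting $\f x_0$-minimum-norm solution of the augmented system with $(x^\dag)_i$: since every augmented solution is constant and $\f x_0 = (x_0)_i$ is itself constant, minimizing $\norm{\f x - \f x_0}^2 = N \norm{x - x_0}^2$ over constant solutions is equivalent to minimizing $\norm{x - x_0}$ over solutions of (\ref{eq:f-ix}), yielding exactly $x^\dag$ in each component.
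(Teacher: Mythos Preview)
Your proposal is correct and follows essentially the same route as the paper: recast the \textsc{eLK} iteration as a two-operator loping Landweber--Kaczmarz scheme on $X^N$ for the augmented system $(\f F,\lambda D)$, verify the scaling and tangential-cone hypotheses componentwise (trivially for the linear part $\lambda D$), invoke Theorem~\ref{convergence}, and then use $\nur(D)=\{\text{constant vectors}\}$ together with the identification of the $\f x_0$-nearest augmented solution with $(x^\dag)_i$. The only minor imprecision is that Theorem~\ref{convergence} asks for the kernel inclusion (\ref{eq:kern}) for each of the two operators $\f F$ and $\lambda D$ \emph{separately} rather than for their joint kernel; but your componentwise reduction via (\ref{eq:kern}) already yields the stronger separate inclusion for $\f F$, and the one for $\lambda D$ is trivial by linearity, so nothing is lost.
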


\begin{proof}
In order to prove the first item we apply Theorem \ref{convergence}, item 1 to the system
(\ref{eq:v-ixi}), (\ref{eq:op-D}). From (\ref{eq:scal}) it follows that $\norm{\f F[\f x]} \leq 1$ for  $ \f x \in B_{\rho}(x_0)^N$.
Moreover, since $D$ is bounded linear, $\norm{\lambda D} \leq 1$ for sufficiently small
$\lambda$. The tangential cone condition (\ref{eq:nlc}) for $\F_i$ implies
\[
\begin{aligned}
  \norm{\f F(\f x) - \f F(\bar{\f x}) - \f F'(\f x)( \f x - \bar{\f x})}
  \leq \eta \norm{ \f F(\f x)-\f F(\bar{\f x})} \,, \cr
   \f x, \bar{\f x} \in  B_{\rho}(x_0)^N  \,.
 \end{aligned}
\]
Moreover, since $\lambda D$ is a linear operator, the tangential cone condition
is obviously satisfied for $\lambda  D$ with the same $\eta$.
Therefore, by applying Theorem \ref{convergence}, item 1
we conclude that $\f x_{n}$ converges to a solution $\tilde {\f x}$
of (\ref{eq:f-ix}), (\ref{eq:op-D}). From (\ref{eq:op-D}) it follows that $\tilde {\f x}= (\tilde{x})_i$ is a
constant vector. Therefore, $\F_i(\tilde x) = y^i$, proving the assertion.

Additionally, let  $\f x^\dag$ denote the solution of (\ref{eq:v-ixi}), (\ref{eq:op-D})  with minimal distance to
$(x_{0})_i$. As an auxiliary result we show that $\f x^\dag = (x^\dag)_i$, where $x^\dag$ is the
unique solution of (\ref{eq:f-ix}) with minimal distance to $x_0$.
Due to (\ref{eq:op-D}) we have $\f x^\dag = (\tilde x)_i$,
for some $\tilde x \in X$. Moreover, the vector $(x^\dag)_i$ is a solution of (\ref{eq:f-ix}), (\ref{eq:op-D})
and
\[
    \norm{\tilde x - x_{0}}^2 = \frac{1}{N} \sum_{i=0}^{N-1}\norm{\tilde x - x_{0}}^2
\leq
    \frac{1}{N} \sum_{i=0}^{N-1}\norm{x^\dag - x_{0}}^2 = \norm{x^\dag - x_{0}}^2 \,.
\]
Therefore $\f x^\dag = (x^\dag)_i$. Now, if (\ref{eq:kern}) is satisfied, then
\[
    \mathcal{N}( \f F'(\f x^\dagger) ) \subseteq \mathcal{N}(\f F'(\f x)) \;, \quad
    \f x \in B_\rho(x_0)^N
\]
and by applying Theorem \ref{convergence} we conclude that $\f x^n \to \f x^\dag$.

The proof of the second item follows from Theorem \ref{convergence}, item  2
in an analogous way as above.
\end{proof}

As consequence of Theorem \ref{thm:convergence}, if $n_\star^\delta$ is
defined by (\ref{eq:lke:stop}),
$\f x^{n_\star^\delta} = (x^{n_\ast^\delta}_i)_i$, then
\begin{equation}\label{eq:average}
    x^{n_\star^\delta} := \sum_{i=0}^{N-1} x^{n_\star^\delta}_i  \longrightarrow x^\dag
    \,,
\end{equation}
as $ \delta \to 0 $.
However, Theorem \ref{thm:convergence} guaranties even more: All components
$x^{n_\star^\delta}_i$ converge to $x^\dag$ as the noise level tend to zero.
Moreover, due to the averaging process in (\ref{eq:average}) the noise level in the
actual regularized solution $x^{n_\star^\delta}$ becomes
noticeable   reduced.

\section{Conclusion} \label{sec:conclusion}

We have suggested two novel Kaczmarz  type regularization techniques for solving
systems of ill-posed operator equations. For each one we proved
convergence and stability results. The first technique is a variation of the
Landweber--Kaczmarz method with a new stopping rule and a
loping parameter that allows to skip some of the inner cycle iterations,
if the corresponding residuals are sufficiently small.
The second method derives from an embedding strategy, where the original system is
rewritten in a larger space.

One advantage of Kaczmarz type methods is the fact that the
resulting regularizing  methods better explore the special structure of the model and
the pointwise noise-estimate $\norm{y^i-y^{\delta,i}} < \delta^i$.
Moreover, for noisy data, it is often much faster \cite[p.19]{Nat97}
in practise than Newton--type methods. The key in this article to prove convergence,
as the  noise level tends to zero, was to introduce a bang--bang relaxation
parameter $\omega_n$ in the iteration (\ref{eq:lwk-lop}). Recently regularizing \textit{Newton--Kaczmarz} methods, similar to
(\ref{eq:lwk-class}), have been analyzed  \cite{BurKal06}. Their
convergence analysis was based on the assumption \cite[Eq. (3.14)]{BurKal06} which
implies that, for exact data, a single equation in (\ref{eq:f-ix}) would
already be sufficient to find the solution of (\ref{eq:f-ix}).
Our strategy of incorporating a bang-bang relaxation parameter in (\ref{eq:lwk-lop}), which can also be
combined with Newton type iterations \cite{Han97}, overcomes this severe restriction.
An analysis  of loping Kaczmarz--type Levenberg--Marquard \cite{Han97} and
steepest--descent \cite{Sch96} regularization methods will be presented in a
forthcoming publication.

Our methods allow fast implementation. The effectiveness is presented in
a subsequent article: There we shall consider the Landweber--Kaczmarz methods of
Sections \ref{sec:lk}, \ref{sec:elk} applied to thermoacoustic tomography
\cite{BurHofPalHalSch05, HalFid06,XuMWan06}, semiconductor equations \cite{LMZ06, LMZ06a} and
Schlieren imaging \cite{Bre98, LedZan99}.

\section*{Acknowledgements}

The work of M.H. and O.S. is supported by FWF (Austrian Fonds zur
F\"orderung der wissenschaftlichen Forschung) grants  Y--123INF and P18172--N02.
Moreover, O.S. is supported by FWF projects FSP S9203 and S9207.
The work of A.L. is supported by the Brasilian National Research Council CNPq,
grants 305823/2003--5 and 478099/2004--5. The authors thank Richard Kowar
for stimulating discussion on Kaczmarz methods.

\bibliographystyle{amsplain}
\bibliography{lit-embedding}

\providecommand{\bysame}{\leavevmode\hbox to3em{\hrulefill}\thinspace}
\providecommand{\MR}{\relax\ifhmode\unskip\space\fi MR }
\providecommand{\MRhref}[2]{%
  \href{http://www.ams.org/mathscinet-getitem?mr=#1}{#2}
}
\providecommand{\href}[2]{#2}
\begin{thebibliography}{10}

\bibitem{BakKok04}
A.~B. Bakushinsky and M.~Y. Kokurin, \emph{Iterative methods for approximate
  solution of inverse problems}, Mathematics and Its Applications, vol. 577,
  Springer, Dordrecht, 2004.

\bibitem{boc02}
L.~Borcera, \emph{Electrical impedance tomography}, Inverse Problems
  \textbf{18} (2002), no.~6, R99--R136.

\bibitem{Bre98}
M.~A. Breazeale, \emph{Schlieren photography in {P}hysics}, Proc. SPIE
  \textbf{3581} (1998), 41--47.

\bibitem{BurKal06}
M.~Burger and B.~Kaltenbacher, \emph{Regularizing newton-kaczmarz methods for
  nonlinear ill-posed problems}, SIAM J. Numer. Anal. \textbf{44} (2006),
  153--182.

\bibitem{BurHofPalHalSch05}
P.~Burgholzer, C.~Hofer, G.~Paltauf, M.~Haltmeier, and O.~Scherzer,
  \emph{Thermoacoustic tomography with integrating area and line detectors},
  IEEE Trans. Ultrason. Ferroelec. Freq. Contr. \textbf{52} (2005), 1577 --
  1583.

\bibitem{EngHanNeu96}
H.W. Engl, M.~Hanke, and A.~Neubauer, \emph{Regularization of inverse
  problems}, Kluwer Academic Publishers, Dordrecht, 1996.

\bibitem{Ep03}
C.~L. Epstein, \emph{{Introduction to the Mathematics of Medical Imaging}},
  Pearson Prentice Hall, Upper Saddle River, NJ, 2003.

\bibitem{HalFid06}
M.~Haltmeier and T.~Fidler, \emph{Mathematical challenges arising in
  thermoacoustic computed tomography with line detectors},
  arXiv:math.AP/0610155 (2006).

\bibitem{Han97}
M.~Hanke, \emph{Regularizing properties of a truncated {N}ewton-{CG} algorithm
  for nonlinear inverse problems}, Numer. Funct. Anal. Optim. \textbf{18}
  (1997), no.~9-10, 971--993.

\bibitem{HanNeuSch95}
M.~Hanke, A.~Neubauer, and O.~Scherzer, \emph{A convergence analysis of
  {L}andweber iteration for nonlinear ill-posed problems}, Numer. Math.
  \textbf{72} (1995), 21--37.

\bibitem{Kac37}
S.~Kaczmarz, \emph{Approximate solution of systems of linear equations},
  Internat. J. Control \textbf{57} (1993), no.~6, 1269--1271, Translated from
  the German.

\bibitem{KalNeuSch06}
B.~Kaltenbacher, A.~Neubauer, and O.~Scherzer, \emph{Iterative regularization
  methods for nonlinear ill--posed problems}, 2005, in preparation.

\bibitem{KowSch02}
R.~Kowar and O.~Scherzer, \emph{{Convergence analysis of a
  {L}andweber-{K}aczmarz method for solving nonlinear ill-posed problems}}, Ill
  posed and inverse problems (book series) \textbf{23} (2002), 69--90.

\bibitem{KruKisReiKruMil03}
R.A Kruger, W.L. Kiser, D.R. Reinecke, G.A. Kruger, and K.D. Miller,
  \emph{Thermoacoustic molecular imaging of small animals}, Molecular Imaging
  \textbf{2} (2003), no.~2, 113--123.

\bibitem{Lan51}
L.~Landweber, \emph{An iteration formula for {F}redholm integral equations of
  the first kind}, Amer. J. Math. \textbf{73} (1951), 615--624.

\bibitem{LedZan99}
E.~G. LeDet and C.~I. Zanelli, \emph{A novel, rapid method to measure the
  effective aperture of array elements}, IEEE Ultrasonics Symposium (1999), --.

\bibitem{LMZ06a}
A.~Leitao, P.A. Markowich, and J.P. Zubelli, \emph{Inverse problems for
  semiconductors: Models and methods}, ch.~in Transport Phenomena and Kinetic
  Theory: Applications to Gases, Semiconductors, Photons, and Biological
  Systems, Ed. C. Cercignani and E. Gabetta, Birkh\"auser, Boston, 2006.

\bibitem{LMZ06}
\bysame, \emph{On inverse dopping profile problems for the stationary
  voltage-current map}, Inv.Probl. \textbf{22} (2006), 1071--1088.

\bibitem{Mor93}
V.A. Morozov, \emph{Regularization methods for ill--posed problems}, CRC Press,
  Boca Raton, 1993.

\bibitem{Nat01}
F.~Natterer, \emph{The mathematics of computerized tomography}, SIAM,
  Philadelphia, 2001.

\bibitem{Nat97}
Frank Natterer, \emph{Algorithms in tomography}, State of the Art in Numerical
  Analysis, vol.~63, 1997, pp.~503--524.

\bibitem{Sch96}
Otmar Scherzer, \emph{A convergence analysis of a method of steepest descent
  and a two-step algorithm for nonlinear ill-posed problems}, Numer. Funct.
  Anal. Optim. \textbf{17} (1996), no.~1-2, 197--214.

\bibitem{SeiVog89}
T.I. Seidman and C.R. Vogel, \emph{Well posedness and convergence of some
  regularisation methods for non--linear ill posed problems}, Inverse Probl.
  \textbf{5} (1989), 227--238.

\bibitem{TikArs77}
A.~N. Tikhonov and V.~Y. Arsenin, \emph{Solutions of ill-posed problems}, John
  Wiley \& Sons, Washington, D.C., 1977, Translation editor: Fritz John.

\bibitem{Tik63b}
A.N. Tikhonov, \emph{Regularization of incorrectly posed problems}, Soviet
  Math. Dokl. \textbf{4} (1963), 1624--1627.

\bibitem{XuMWan06}
M.~Xu and L.V. Wang, \emph{Photoacoustic imaging in biomedicine}, Review of
  Scientific Instruments \textbf{77} (2006), no.~4, 041101.

\end{thebibliography}

\medskip
        {\it E-mail address: }markus.haltmeier@uibk.ac.at\\
 \indent{\it E-mail address: }a.leitao@ufsc.br\\
 \indent{\it E-mail address: }otmar.scherzer@uibk.ac.at\\

\end{document}